\newtheorem{theorem}{Theorem}[section]
\newtheorem{lemma}[theorem]{Lemma}
\newtheorem{remark}[theorem]{Remark}
\newcommand{\io}{\int_\Omega}
\newcommand{\R}{\mathbb{R}}
\newcommand{\tm}{T_{\mathrm{max}}}
\newcommand{\abs}{\\[5pt]}
\begin{document}
\title{Global existence and infinite time blow-up of classical solutions to chemotaxis systems of local sensing in higher dimensions}
\author{
Kentaro Fujie\footnote{fujie@tohoku.ac.jp}\\
{\small Tohoku University,}\\
{\small Sendai, 980-8578, Japan}
\and
Takasi Senba\footnote{senba@fukuoka-u.ac.jp}\\
{\small Fukuoka University,}\\
{\small Fukuoka, 814-0180, Japan}
\medskip
}
\date{\small\today}
\maketitle
\begin{abstract}
This paper deals with the fully parabolic chemotaxis system of local sensing in higher dimensions. 
Despite the striking similarity between this system and the Keller--Segel system, we prove the absence of finite-time blow-up phenomenon in this system even in the supercritical case. 
It means that for any regular initial data, independently of the magnitude of mass, the classical solution exists globally in time in the higher dimensional setting. 
Moreover, for the exponential decaying motility case, 
it is established that solutions may  blow up at infinite time for any magnitude of mass. 
In order to prove our theorem, we deal with some auxiliary identity as an evolution equation with a time dependent operator. In view of this new perspective, the direct consequence of the abstract theory is rich enough  to establish global existence of the system.
\abs
\noindent
 {\bf Key words:} chemotaxis; Keller--Segel system; global existence\\
 {\bf AMS Classification:} 35B44, 35K51, 35Q92, 92C17.
\end{abstract}
%
%
%
%
%
%
\section{Introduction}\label{section_introduction}
Consider the following initial-boundary value problem for the fully parabolic system:
\begin{equation}
\begin{cases}\label{P}
u_t=\Delta (\gamma (v)u)&(x,t) \in \Omega\times (0,\infty),\\
v_t=\Delta v-v+u&(x,t) \in \Omega\times (0,\infty),\\
\partial_\nu u=\partial_\nu v=0 \qquad &(x,t) \in \partial\Omega\times (0,\infty),\\
u(x,0)=u_0(x),\;\;v(x,0)=v_0(x),\qquad & x\in\Omega,
\end{cases}
\end{equation}where $\Omega\subset\mathbb{R}^n$ ($n\geq3$) is a smooth bounded domain.
The initial data $(u_0,v_0)$ satisfies
\begin{equation}\label{ini}
 (u_0,v_0)\in (W^{1,p_0}(\Omega))^2 \;\text{with some}\;p_0>n,\quad u_0\geq0,\; v_0 \geq0 \quad  \mbox{in } \overline\Omega, \quad u_0\not\equiv0,
\end{equation}
and for $\gamma$ we assume
\begin{equation}\label{gamma1}
\gamma\in C^3[0,\infty),\;\gamma>0,\;\;\gamma^\prime \leq0\;\;\text{on}\;(0,\infty),
\end{equation}
 and the vanishing property
\begin{equation}\label{gamma2}
\lim\limits_{s\rightarrow\infty}\gamma(s)=0.
\end{equation}


The system \eqref{P} is motivated from biological background (see \cite{KSb, Science, PRL12}). 
From a mathematical view point, this system resembles the fully parabolic Keller--Segel system. 
Especially, in the case $\gamma (v) = e^{-v}$, 
the system \eqref{P} has same mathematical structures as one of the Keller--Segel system (\cite{JW, FJ20, BLT}). 
Indeed, smooth nonegative solutions of \eqref{P} satisfy the mass conservation law
$$\|u(t)\|_{L^1(\Omega)} = \|u_0\|_{L^1(\Omega)}\qquad t>0,$$
and the Lyapunov functional is constructed:
$$
\frac{d}{dt}\mathcal{F}(u(t),v(t)) 
+ \io ue^{-v} |\nabla (\log u - v)|^2\,dx=0 \qquad t>0,
$$
where 
$$
\mathcal{F}(u,v) := 
\io u \log u\,dx - \io uv\,dx + \frac{1}{2}\io v^2\,dx + \io |\nabla v|^2\,dx.
$$
We mention the Keller--Segel system has the same Lyapunov functional (\cite{NSY}). 
Moreover, these systems shares the same stationary problem (\cite{JW, FJ20}) and the scaling structure.  
For the simplified case 
$$
\begin{cases}
u_t=\Delta (e^{-v}u)&(x,t) \in \R^n \times (0,\infty),\\
v_t=\Delta v +u&(x,t) \in \R^n \times (0,\infty),
\end{cases}
$$
the scaled function 
$(u_\lambda(x,t), v_\lambda(x,t)) : = (\lambda^2 u(\lambda x,\lambda^2 t), v(\lambda x, \lambda^2 t))$ ($\lambda>0$) satisfies the simplified system and 
$$
\| u_\lambda \|_{L^1(\R^n)}
=
\lambda^{2-n}
\| u \|_{L^1(\R^n)}.
$$
Thus the mass critical dimension is $n=2$. 
Actually, in \cite{JW, FJ20} the critical mass phenomenon is observed in the two dimensional setting:
\begin{itemize}
\item for small mass, solutions exists globally and remains bounded uniformly in time;
\item  for large mass, solutions exists globally and may blow up at infinite time.
\end{itemize}

In the present paper, we focus on the supercritical case ($n\geq3$). In \cite{BLT}, for the case $\gamma(v)=e^{-v}$ global existence of very weak solutions is established in arbitrary dimensional setting. 
Global existence and boundedness of solutions to \eqref{P} with some polynomial decaying function $\gamma$ are obtained under smallness conditions on some parameters in \cite{YK2017, D2019, FJ21}. 
For the simplified case (the second equation of \eqref{P} is replaced by the elliptic equation), in \cite{JL} global existence is established for any magnitude of mass in the higher dimensional setting. 
On the other hand, the following theorem claims global existence of classical solutions for the fully parabolic system independently of the magnitude of mass.

\begin{theorem}\label{Main}
Let $n\geq 3$.
Assume $\gamma$ satisfies $\eqref{gamma1}$ and $\eqref{gamma2}$.
For any given initial data $(u_0,v_0)$ satisfying \eqref{ini}, The system \eqref{P} permits a unique global classical solution $$(u,v)\in (C^0([0,\infty); W^{1,p_0}(\Omega) )\cap C^{2,1}(\overline{\Omega}\times(0,\infty)))^2.$$
\end{theorem}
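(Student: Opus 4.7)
\medskip
\noindent\textbf{Proof plan.} The plan is to establish a $T$-independent $L^\infty$ bound on $u$ over every $[0,T]\subset[0,T_{\max})$, which together with the standard extensibility criterion forces $T_{\max}=\infty$. Local existence and uniqueness of a classical solution on a maximal interval $[0,T_{\max})$ in $C^0([0,T_{\max}); W^{1,p_0}(\Omega))^2$ follow from standard fixed-point arguments for quasilinear parabolic systems, together with the criterion: if $T_{\max}<\infty$, then $\|u(\cdot,t)\|_{L^\infty(\Omega)}\to\infty$ as $t\uparrow T_{\max}$. Three elementary estimates come immediately from integrating \eqref{P}: mass conservation $\|u(t)\|_{L^1}=\|u_0\|_{L^1}$; the bound $\|v(t)\|_{L^1}\leq\max\{\|v_0\|_{L^1},\|u_0\|_{L^1}\}$ from $\frac{d}{dt}\int v=-\int v+\int u$; and, since $0<\gamma(v)\leq\gamma(0)$, the uniform bound $\|w(t)\|_{L^1}\leq\gamma(0)\|u_0\|_{L^1}$ for the auxiliary quantity $w:=\gamma(v)u$.

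\medskip
\noindent The central observation, as the abstract signals, is that the first PDE reads $u_t=\Delta w$ in $\Omega$ with $\partial_\nu w=0$ on $\partial\Omega$, which I would view as a linear non-autonomous Cauchy problem $u_t=\mathcal{A}(t)u$ with time-dependent generator $\mathcal{A}(t)\phi:=\Delta(\gamma(v(\cdot,t))\phi)$. Integrating in time gives the representation $u(t)=u_0+\Delta W(t)$, where $W(t):=\int_0^t w(s)\,ds\geq 0$; substituting this relation into $v_t-\Delta v+v=u$ produces the clean identity
\[
(\partial_t-\Delta+1)(v+W)=u_0+w+W,
\]
whose right-hand side is pointwise nonnegative with $L^1(\Omega)$-norm growing at most linearly in $t$. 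Applying the Neumann heat semigroup of $-\Delta+1$ to this identity, and exploiting the sign property $\gamma'\leq 0$ (which ensures that the drift term $u\gamma'(v)\nabla v$ produced in the expansion $\Delta w=\nabla\cdot(\gamma(v)\nabla u+u\gamma'(v)\nabla v)$ has a definite sign against $\nabla u^p$), the aim is to produce a superlinear a priori estimate $\|u(\cdot,t)\|_{L^p}\leq C(T)$ for some $p>n/2$ uniformly on $[0,T]$.

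\medskip
\noindent With such a $L^p$ estimate in hand, $L^p$--$L^\infty$ smoothing of the Neumann heat semigroup applied to $v_t-\Delta v+v=u$ yields $\|v\|_{L^\infty([0,T]\times\Omega)}<\infty$; this in turn guarantees $\inf\gamma(v)>0$ on $[0,T]\times\Omega$, so that the first equation becomes uniformly parabolic with smooth coefficients and an Alikakos--Moser iteration upgrades the control to $\|u\|_{L^\infty([0,T]\times\Omega)}<\infty$, contradicting finite-time blow-up. The principal obstacle is precisely the superlinear estimate of the previous paragraph: in the supercritical regime $n\geq 3$, the naive pointwise-in-time computation of $\frac{d}{dt}\int u^p$ produces the indefinite-sign cross term $\int u^{p-1}\gamma'(v)\nabla v\cdot\nabla u$ which, unlike in the two-dimensional critical-mass analyses of Jiang--Wang and Fujie--Jiang, cannot be absorbed through a smallness hypothesis on the mass. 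The time-dependent operator viewpoint advertised in the abstract is designed to bypass this difficulty by replacing differentiation of $L^p$ norms with integrated identities such as the one displayed above, in which the universal $L^1$ bound on $w=\gamma(v)u$ drives the evolution and closes the a priori estimate for initial data of arbitrary magnitude.
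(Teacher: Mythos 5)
Your proposal correctly sets up the contradiction framework (assume $\tm<\infty$, derive an $L^\infty$ bound on $u$ up to $\tm$, contradict the extensibility criterion), and the elementary $L^1$ bounds and the identity $(\partial_t-\Delta+1)(v+W)=u_0+\gamma(v)u+W$ with $W=\int_0^t\gamma(v)u\,ds$ are fine. But the argument has a genuine gap at exactly the decisive step: the a priori bound $\|u(\cdot,t)\|_{L^p(\Omega)}\leq C$ for some $p>n/2$ up to $t=\tm$ is never derived — you state it as an ``aim,'' correctly observe that the naive computation of $\frac{d}{dt}\io u^p$ produces an unabsorbable cross term (note this also contradicts your earlier claim that $\gamma'\leq0$ gives that term a definite sign), and then assert that the time-dependent-operator viewpoint ``is designed to bypass this difficulty'' without carrying out the bypass. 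An $L^1$ bound on $\gamma(v)u$ fed through heat-semigroup smoothing cannot reach $L^p$ with $p>n/2$ in dimension $n\geq3$, so the displayed identity alone does not close the estimate.

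What the paper actually does is structurally different and worth noting. First, it works with the elliptic auxiliary function $w=(1-\Delta)^{-1}u$ (not $\gamma(v)u$), for which $w_t+\gamma(v)u=(1-\Delta)^{-1}[\gamma(v)u]\leq\gamma(0)w$ gives the pointwise bound $w\leq w_0e^{\gamma(0)t}$, and a comparison lemma then gives the \emph{pointwise} bound $v\leq K(w+1)$; this is obtained \emph{before} any $L^p$ information on $u$ (your plan derives $\|v\|_{L^\infty}$ only after the $L^p$ bound, which is circular), and it is what makes $\gamma(v)\in[\gamma(v^*(\tm)),\gamma(0)]$ uniformly elliptic on $[\tfrac12\tm,\tm]$. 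Second, a De Giorgi--Nash--Moser argument applied to $w_t-\gamma(v)(\Delta w-w)=(1-\Delta)^{-1}[\gamma(v)u]$ yields H\"older continuity of $w$, hence of $v$ and $\gamma(v)$, up to $t=\tm$; this regularity is the hypothesis needed to invoke Tanabe's theory for the non-autonomous operator $A(t)\varphi=-\gamma(v(t))(\Delta\varphi-\varphi)$ and its fundamental solution $U(t,s)$. Third — and this is the mechanism your proposal is missing — the estimate $\|A(t)U(t,s)A(s)^{-1}\|\leq C$ applied to the integral representation of $w$ gives
\[
\|A(t)w(t)\|_{L^p(\Omega)}\leq C\|A(0)w(\tfrac12\tm)\|_{L^p(\Omega)}+C\int_0^t\|A(s)F(s)\|_{L^p(\Omega)}\,ds,
\]
and the algebraic identities $A(t)w(t)=\gamma(v)u$ and $A(s)F(s)=(\gamma(v))^2u$ turn this into a closed Gronwall inequality for $\|u(t)\|_{L^p(\Omega)}$ for every $p\in(1,\infty)$, with no derivative loss and no cross term. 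Without this (or an equivalent) mechanism, your outline does not constitute a proof.
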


Moreover, for $\gamma (v)=e^{-v}$, the higher dimensional case ($n \geq3$) is supercritical as mentioned above. The next theorem guarantees an infinite time blow-up for any magnitude of mass.

\begin{theorem}\label{Main2}
Let $n\geq 3$, $\Omega = B_R(0)$ and $\gamma(s)=e^{-s}$. 
For any $m >0$ there exists some radially symmetric initial datum 
$(u_0,v_0)$ satisfying \eqref{ini} such that
\begin{itemize}
\item $\io u_0= m;$
\item the global classical solution $(u,v)$ of \eqref{P} blows up at infinite time, that is,
	\begin{equation*}
	\limsup\limits_{t\nearrow \infty} 
	\left( \|u(\cdot,t)\|_{L^\infty(\Omega)}
	+\|v(\cdot,t)\|_{L^\infty(\Omega)} \right)=\infty.
	\end{equation*}
\end{itemize}
\end{theorem}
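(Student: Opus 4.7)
The strategy is proof by contradiction, combining the Lyapunov structure recalled in the introduction with a concentration construction exploiting the supercriticality of $n\geq 3$. Let $(u,v)$ denote the global classical solution provided by Theorem~\ref{Main}; I would assume to the contrary that $\sup_{t\geq 0}(\|u(\cdot,t)\|_{L^\infty(\Omega)}+\|v(\cdot,t)\|_{L^\infty(\Omega)})=:M<\infty$ and derive a contradiction.

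Under this boundedness, standard parabolic Schauder estimates give precompactness of the orbit $\{(u(t),v(t))\}_{t\geq 1}$ in $C^2(\overline\Omega)\times C^2(\overline\Omega)$. The Lyapunov identity combined with the crude bound $\mathcal{F}(u,v)\geq -|\Omega|/e-Mm$ yields $\mathcal{F}(u(t),v(t))\to\mathcal{F}_\infty\in\R$ and finiteness of the total dissipation $\int_0^\infty\io ue^{-v}|\nabla(\log u-v)|^2dx\,dt$. Extracting a sequence $t_k\to\infty$ along which the dissipation density vanishes in $L^1(\Omega)$, and exploiting strict positivity of $u$ (maximum principle for the $u$-equation), one obtains $(u(t_k),v(t_k))\to(U,V)$ in $C^2$ with $\log U-V\equiv$ const, i.e.\ $U=Ce^V$ for some $C>0$. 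Passing to the limit in the second equation, $(U,V)$ solves $-\Delta V+V=Ce^V$ in $\Omega$ with $\partial_\nu V=0$ and $C\io e^V=m$. Using $\log U=\log C+V$ together with $\int|\nabla V|^2+\int V^2=\int UV$ from the stationary equation, a short computation reduces the limit Lyapunov value to
\begin{equation*}
\mathcal{F}(U,V)=\io U\log U\,dx-\tfrac12\io V^2\,dx,
\end{equation*}
and $\mathcal{F}(u_0,v_0)\geq\mathcal{F}(U,V)$ by monotonicity.

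Next, for the prescribed mass $m>0$ I would construct radially symmetric admissible initial data with $\mathcal{F}(u_0,v_0)$ as negative as desired. Taking $u_0$ a smooth nonnegative radial bump of mass $m$ supported in $B_\e(0)$ and $v_0(x)=A\varphi(|x|/\e)$ with a smooth cutoff $\varphi$ and amplitude $A=\e^{-\alpha}$ for some fixed $0<\alpha<n-2$, an elementary scaling computation gives $-\io u_0v_0\asymp -m\e^{-\alpha}$, while $\io u_0\log u_0\asymp -nm\log\e$, $\tfrac12\io v_0^2\asymp\e^{n-2\alpha}$, and $\io|\nabla v_0|^2\asymp\e^{n-2-2\alpha}$ are all dominated by $-m\e^{-\alpha}$ as $\e\to 0$. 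Hence $\mathcal{F}(u_0,v_0)\to-\infty$ along the concentration sequence.

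The main obstacle is to close the contradiction. Denoting $\mathcal{L}(m,\Omega):=\inf\{\mathcal{F}(U,V)\}$ over classical radial solutions of the stationary system with mass $m$, if $\mathcal{L}(m,\Omega)>-\infty$ then choosing $(u_0,v_0)$ as above with $\mathcal{F}(u_0,v_0)<\mathcal{L}(m,\Omega)$ contradicts $\mathcal{F}(u_0,v_0)\geq\mathcal{F}(U,V)\geq\mathcal{L}(m,\Omega)$, finishing the proof. Equivalently, one needs a priori control of $\io V^2$ for classical radial solutions of $-\Delta V+V=Ce^V$ on $B_R$ with $\partial_\nu V=0$ and $C\io e^V=m$. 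Since this stationary problem is supercritical in the Moser--Trudinger sense for $n\geq 3$, such control must be extracted from the radial ODE itself, via phase-plane analysis of $-V''-\tfrac{n-1}{r}V'+V=Ce^V$ on $(0,R)$ together with a Pohozaev-type identity adapted to the Neumann ball; this is the delicate technical core of the argument.
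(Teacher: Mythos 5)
Your strategy coincides with the paper's: argue by contradiction, use boundedness plus parabolic Schauder estimates to extract convergence along $t_k\to\infty$ to a radial stationary solution $(u_\infty,v_\infty)$ with $\mathcal{F}(u_\infty,v_\infty)\leq\mathcal{F}(u_0,v_0)$, show that $\mathcal{F}$ is bounded below over all radial stationary solutions of mass $m$, and then construct radial initial data of mass $m$ with $\mathcal{F}(u_0,v_0)$ below that bound. Your concentration construction for the initial data (bump of mass $m$ in $B_\e$, $v_0$ of amplitude $\e^{-\alpha}$ with $0<\alpha<n-2$, so that $-\io u_0v_0\asymp-m\e^{-\alpha}$ dominates the remaining terms) is correct and is essentially Lemma 3.2 of Winkler's 2010 paper, which the authors cite for exactly this purpose.

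The one place where you stop short is the step you yourself flag as ``the delicate technical core'': the lower bound $\mathcal{L}(m,\Omega)=\inf\mathcal{F}(U,V)>-\infty$ over radial stationary solutions. As written, your argument is incomplete without it, since everything hinges on this infimum being finite; gesturing at ``phase-plane analysis and a Pohozaev-type identity'' does not establish it. The paper closes this gap not by new analysis but by observing that for $\gamma(s)=e^{-s}$ the stationary problem and the Lyapunov functional of \eqref{P} coincide with those of the fully parabolic Keller--Segel system, so Winkler's Lemma 3.4 (J.\ Differential Equations 248 (2010)) applies verbatim and yields the required lower bound for radial stationary solutions on a ball. If you want a self-contained proof you would indeed have to reprove that lemma, but no genuinely new ODE analysis is needed; the intended route is to transfer the Keller--Segel result. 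With that citation inserted, your proof is complete and is essentially identical to the paper's.
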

\begin{remark}
For the Keller--Segel system, a finite time blowup phenomenon occurs in the supercritical case (\cite{Win2013}). 
Although the system \eqref{P} shares several features of the Keller--Segel system, solutions of \eqref{P} exist globally even in the supercritical case (Theorem \ref{Main}) and a blow-up phenomenon takes place at infinite time (Theorem \ref{Main2}).
\end{remark}
\section{Preliminaries}\label{section_preliminaries}
We recall some useful lemmas. 
Local existence and uniqueness of classical solutions to system \eqref{P} are established by applying the abstract theory (cf. \cite[Theorem 1]{Horstmann}).
\begin{lemma}\label{local}
Assume that $\gamma$ satisfies \eqref{gamma1} and $(u_0,v_0)$ satisfies \eqref{ini}. 
There exists $T_{\mathrm{max}} \in (0, \infty]$ such that problem \eqref{P} permits a unique classical solution 
$$(u,v)\in  (C^0([0,T_{\mathrm{max}}); W^{1,p_0}(\Omega) )\cap C^{2,1}(\overline{\Omega}\times(0,T_{\mathrm{max}})))^2.$$
If $T_{\mathrm{max}}<\infty$, then
	\begin{equation*}
	\limsup\limits_{t\nearrow T_{\mathrm{max}}} 
	\left( \|u(\cdot,t)\|_{L^\infty(\Omega)}+\|v(\cdot,t)\|_{L^\infty(\Omega)} \right)=\infty.
	\end{equation*}
 Moreover the solution is positive on $ \overline{\Omega}\times(0,T_{\mathrm{max}})$
and the mass conservation law holds:
	\begin{equation*}
	\int_{\Omega}u(\cdot,t)\,dx=\int_{\Omega}u_0 \,dx
	\quad \text{for\ all}\ t \in (0,T_{\mathrm{max}}).
	\end{equation*}	
	\end{lemma}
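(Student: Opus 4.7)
The plan is to derive the four assertions of the lemma in order: local existence and uniqueness, the blow-up alternative, positivity, and the conservation law.

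First I would cast \eqref{P} as an abstract quasilinear parabolic system on $X=(W^{1,p_0}(\Omega))^2$ with $p_0>n$. Writing the first equation in divergence form
$$
u_t = \nabla \cdot \bigl(\gamma(v)\nabla u + u\,\gamma'(v)\nabla v\bigr),
$$
the principal part is triangular, with diagonal operators $u\mapsto -\nabla\cdot(\gamma(v)\nabla u)$ and $v\mapsto -\Delta v$ under homogeneous Neumann conditions; because $p_0>n$ implies $W^{1,p_0}(\Omega)\hookrightarrow C^0(\overline\Omega)$, the coefficient $\gamma(v)$ is continuous, uniformly positive, and locally Lipschitz as a function of $v\in W^{1,p_0}(\Omega)$, while $\gamma\in C^3[0,\infty)$ supplies the differentiability needed for the Nemytskii maps. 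The abstract theory used in \cite{Horstmann}, built on Amann's quasilinear framework, then produces a unique maximal $C^0([0,\tm);X)$-solution, interior parabolic regularity promotes it to $(C^{2,1}(\overline\Omega\times(0,\tm)))^2$, and one obtains the extendibility criterion that either $\tm=\infty$ or the $X$-norm blows up. To upgrade the latter to the stated $L^\infty$-criterion, I would run a standard bootstrap: assuming $\|u\|_{L^\infty}+\|v\|_{L^\infty}$ is bounded on $[0,\tm)$, maximal $L^p$-regularity applied to $v_t-\Delta v+v=u$ yields $\|\nabla v\|_{L^\infty}$; the first equation is then a uniformly parabolic divergence-form equation with bounded coefficients, from which Moser-type iteration together with standard $W^{1,p}$-estimates recover a bound on $\|u\|_{W^{1,p_0}}$, contradicting the abstract blow-up alternative.

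Next, for positivity I treat $v$ first: it solves a linear parabolic equation with non-negative source $u\geq 0$, non-negative initial datum, and Neumann boundary condition, so the parabolic maximum principle gives $v\geq 0$, and its strong form yields $v>0$ on $\overline\Omega\times(0,\tm)$. For $u$, expanding
$$
\Delta(\gamma(v)u) = \gamma(v)\Delta u + 2\gamma'(v)\nabla v\cdot\nabla u + \bigl(\gamma''(v)|\nabla v|^2+\gamma'(v)\Delta v\bigr)u
$$
recasts the first equation as a scalar linear parabolic equation for $u$ with bounded, smooth coefficients depending only on the already-constructed $v$; since $u_0\geq 0$ with $u_0\not\equiv 0$ and $\gamma(v)>0$, the strong maximum principle yields $u>0$ on $\overline\Omega\times(0,\tm)$. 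The mass conservation follows by direct integration and the Neumann conditions:
$$
\frac{d}{dt}\io u\,dx = \io \Delta(\gamma(v)u)\,dx = \int_{\pO}\bigl(\gamma'(v)u\,\partial_\nu v+\gamma(v)\,\partial_\nu u\bigr)\,dS = 0.
$$

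The main obstacle is the abstract local existence step: one has to verify the hypotheses of Amann's quasilinear theory (normal ellipticity, the right Sobolev-trace regularity of the coefficients, and compatibility of the Neumann boundary conditions) for the divergence-form operator on $X=(W^{1,p_0}(\Omega))^2$, so that the solution is simultaneously $W^{1,p_0}$-continuous up to $t=0$ and classically smooth for $t>0$. Once that is in place, the remaining items (the $L^\infty$-blow-up alternative, positivity, and the mass conservation) are routine.
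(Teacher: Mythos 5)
Your proposal is correct and follows essentially the same route as the paper, which disposes of this lemma by citing the abstract quasilinear theory (Amann's framework as packaged in \cite[Theorem 1]{Horstmann}) and then obtains positivity and mass conservation by the maximum principle and integration over $\Omega$ exactly as you describe. The only point to tidy is the order of the positivity argument: you invoke $u\geq 0$ as the source sign when treating $v$ before having established it, so you should first derive $u\geq 0$ from the non-divergence form of its own equation (whose zeroth-order coefficient is bounded and which does not require the sign of $v$), and only then conclude $v\geq 0$ and upgrade both to strict positivity via the strong maximum principle and Hopf's lemma.
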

Let $(u,v)$ be a solution of \eqref{P} in $\Omega \times (0,\tm)$. 
We introduce the auxiliary function $w(x,t)$ such that 
\begin{equation*}
\begin{cases}
-\Delta w+w=u \qquad&(x,t) \in \Omega\times (0,\tm),\\
\partial_\nu w=0&(x,t) \in \partial\Omega\times (0,\tm).
\end{cases}
\end{equation*} 
Here we remark $w = (1-\Delta)^{-1}u \in C^{2,1}(\overline{\Omega}\times(0,T_{\mathrm{max}}))$ by the elliptic regularity theory.
The following lemmas are established in \cite[Lemma 5, Lemma 7]{FJ20}.
\begin{lemma}\label{keylem1}
For any $0<t<T_{\mathrm{max}}$, there holds
	\begin{equation*}
	w_t+\gamma(v)u=(I-\Delta)^{-1}[\gamma(v)u].
	\end{equation*}
	Moreover, for  any $x\in\Omega$ and  $t\in[0,T_{\mathrm{max}})$, it follows
	\begin{equation*}
		0 \leq w(x,t)\leq w_0(x)e^{\gamma(0)t}.
	\end{equation*}
\end{lemma}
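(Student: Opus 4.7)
\emph{Step 1: the identity.} I would begin by differentiating the defining elliptic problem $(I-\Delta)w = u$ in time. The regularity afforded by Lemma \ref{local} together with elliptic regularity gives $w \in C^{2,1}(\overline\Omega \times (0,T_{\max}))$ and $\partial_\nu w_t = 0$ on $\partial\Omega$. Using the first equation of \eqref{P}, this yields
\begin{equation*}
(I-\Delta)w_t = u_t = \Delta(\gamma(v)u).
\end{equation*}
Since $\partial_\nu u = \partial_\nu v = 0$ forces $\partial_\nu(\gamma(v)u) = \gamma'(v)u\,\partial_\nu v + \gamma(v)\partial_\nu u = 0$, the function $\gamma(v)u$ lies in the domain of the Neumann operator $(I-\Delta)$, and I may invoke the elementary algebraic identity $\Delta f = f - (I-\Delta)f$ valid for such $f$. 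This rewrites the previous equation as $(I-\Delta)w_t = \gamma(v)u - (I-\Delta)[\gamma(v)u]$, and applying $(I-\Delta)^{-1}$ termwise gives exactly the claimed identity.

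\emph{Step 2: the pointwise bounds.} Nonnegativity of $w$ is immediate from the weak maximum principle applied to $-\Delta w + w = u \geq 0$ with homogeneous Neumann data, since $u$ is nonnegative by Lemma \ref{local}. For the upper bound, I would exploit the monotonicity hypothesis $\gamma' \leq 0$ in \eqref{gamma1} combined with the nonnegativity of $v$ to get the pointwise inequality $\gamma(v) \leq \gamma(0)$. Since the Neumann resolvent $(I-\Delta)^{-1}$ is order-preserving on nonnegative data (again by the maximum principle), we obtain
\begin{equation*}
(I-\Delta)^{-1}[\gamma(v)u] \leq \gamma(0)\,(I-\Delta)^{-1} u = \gamma(0)\,w.
\end{equation*}
Plugging this into the identity from Step 1 and discarding the nonnegative term $\gamma(v)u$ yields the pointwise differential inequality $w_t \leq \gamma(0)\,w$ on $\Omega \times (0, T_{\max})$. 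A Gronwall-type argument (observing that $\partial_t(e^{-\gamma(0)t} w) \leq 0$) then delivers the desired bound $w(x,t) \leq w_0(x)\,e^{\gamma(0)t}$.

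\emph{Main obstacle.} No part of the argument is deep; it reduces to a pair of algebraic manipulations coupled with maximum-principle monotonicity. The only conceptual step worth underlining is the observation that, on the Neumann domain, $(I-\Delta)^{-1}\Delta$ acts as the shift $f \mapsto (I-\Delta)^{-1}f - f$; this is what converts the naive equation for $w_t$ into a form compatible with the order-preserving character of the resolvent. The remaining technicalities (commuting $\partial_t$ with $(I-\Delta)^{-1}$, justifying the application of the Neumann resolvent to $\gamma(v)u$) are routine once the $C^{2,1}$ regularity of $w$ has been recorded.
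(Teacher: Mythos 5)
Your proof is correct and follows essentially the same route as the argument the paper relies on (it cites [FJ20, Lemma 5] rather than reproving the lemma): apply the Neumann resolvent $(I-\Delta)^{-1}$ to the first equation of \eqref{P}, use $\Delta f = f-(I-\Delta)f$ on the Neumann domain to obtain the identity, and then combine $\gamma(v)\le\gamma(0)$, the order-preservation of the resolvent, and a pointwise Gronwall argument for the bound $0\le w\le w_0e^{\gamma(0)t}$. No gaps.
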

\begin{lemma}\label{keylem2} 
There exists $K>0$ depending on the initial data and $\gamma$ such that for all $(x,t)\in\Omega\times[0,T_{\mathrm{max}})$,
	\begin{equation*}
	v(x,t)\leq K\left(w(x,t)+1\right).
	\end{equation*} 
\end{lemma}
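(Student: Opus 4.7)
The plan is to establish the pointwise comparison via the parabolic maximum principle applied to the auxiliary function $\phi := v - Kw - M$ for constants $K$ and $M$ chosen as follows. I would set $K := 2$. By the vanishing property \eqref{gamma2}, I can then pick $M$ so large that simultaneously $M \geq \|v_0\|_{L^\infty(\Omega)}$ (finite since $p_0 > n$) and $\gamma(M) \leq 1/K = 1/2$. The first condition ensures $\phi(\cdot, 0) \leq v_0 - M \leq 0$; the second will be the decisive ingredient below. Since $v$ and $w$ satisfy homogeneous Neumann conditions, so does $\phi$.

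The key computation uses Lemma \ref{keylem1}. Combining $v_t - \Delta v + v = u$, the identity $-\Delta w + w = u$, and $w_t = -\gamma(v)u + (I-\Delta)^{-1}[\gamma(v)u]$, a straightforward calculation yields
\begin{equation*}
\phi_t - \Delta \phi + \phi \;=\; \bigl[\,1 - K + K\gamma(v)\,\bigr] u \;-\; K(I-\Delta)^{-1}[\gamma(v)u] \;-\; M.
\end{equation*}
Wherever $\phi(x,t) \geq 0$ we have $v(x,t) \geq Kw(x,t) + M \geq M$, so monotonicity of $\gamma$ gives $\gamma(v(x,t)) \leq \gamma(M) \leq 1/K$, forcing the bracketed factor to be at most $2 - K = 0$. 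Since $u \geq 0$ and since $(I - \Delta)^{-1}$ with Neumann conditions is positivity preserving (so $(I-\Delta)^{-1}[\gamma(v)u] \geq 0$), one concludes
\begin{equation*}
\phi_t - \Delta \phi + \phi \;\leq\; -M \;<\; 0 \qquad \text{at every point where }\phi \geq 0.
\end{equation*}

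From here a standard parabolic maximum principle argument finishes the proof. On each $\overline{\Omega} \times [0, T]$ with $T < \tm$, the function $\phi$ attains its maximum. If that maximum were positive and attained at some $(x^\ast, t^\ast)$ with $t^\ast > 0$ and $x^\ast \in \Omega$, then $\phi_t \geq 0$, $\Delta \phi \leq 0$, and $\phi > 0$ together would give $\phi_t - \Delta \phi + \phi > 0$, contradicting the displayed bound; a boundary maximum is excluded by Hopf's lemma together with $\partial_\nu \phi = 0$ (applied in a small parabolic neighborhood in which $\phi > 0$ by continuity, so the inequality $\phi_t - \Delta \phi \leq -M < 0$ holds there). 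Hence the maximum is attained at $t = 0$, where $\phi \leq 0$, forcing $\phi \leq 0$ throughout $\overline{\Omega} \times [0, \tm)$. This yields $v \leq 2w + M \leq (M+2)(w + 1)$, which is the claim with $K := M + 2$.

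The main technical hazard is the nonlocal term $(I - \Delta)^{-1}[\gamma(v)u]$, which resists any quantitative estimate; the decisive observation is that only its sign matters and that positivity is automatic here. A secondary, milder, issue is the routine care needed when invoking Hopf at a boundary maximum, since the parabolic inequality is only guaranteed on the set $\{\phi \geq 0\}$; continuity allows one to restrict to a small neighborhood of the purported maximum where $\phi$ is strictly positive and the inequality is valid.
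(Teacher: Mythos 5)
Your proof is correct. Note that the paper itself does not prove this lemma: it imports it directly from \cite[Lemma 7]{FJ20}, so there is no in-paper argument to compare against; what you have written is a self-contained comparison-principle proof in exactly the spirit of that cited lemma. The computation checks out: with $\phi=v-Kw-M$, using $v_t=\Delta v-v+u$, $\Delta w=w-u$ and $w_t=-\gamma(v)u+(I-\Delta)^{-1}[\gamma(v)u]$ one indeed gets $\phi_t-\Delta\phi+\phi=\bigl[1-K+K\gamma(v)\bigr]u-K(I-\Delta)^{-1}[\gamma(v)u]-M$, and the two decisive observations --- that the vanishing property \eqref{gamma2} lets you choose $M$ with $\gamma(M)\leq 1-\tfrac1K$ so the bracket is nonpositive wherever $\phi\geq0$, and that the nonlocal term only needs its sign (positivity of $(I-\Delta)^{-1}$ under Neumann conditions, which the paper itself invokes in Lemma \ref{lem_hoelder}) --- are precisely the right ones. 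Two cosmetic remarks: the condition you actually need is $\gamma(M)\leq 1-\tfrac1K$, which happens to coincide with your stated $\tfrac1K$ only because $K=2$; and the final inequality $\phi_t-\Delta\phi+\phi\leq -M$ need not be strictly negative if $M=0$, but your interior-maximum argument already extracts strictness from $\phi(x^\ast,t^\ast)>0$, so nothing breaks. The handling of the boundary case via Hopf's lemma on a parabolic neighborhood where $\phi>0$ is also correctly flagged and resolved.
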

By Lemma \ref{keylem1} and Lemma \ref{keylem2}, 
it follows for any $T>0$  
\begin{equation}\label{v_bound}
0\leq v(x,t) \leq v^\ast(T):=K(w_0(x)e^{\gamma(0)T}+1) \qquad \mbox{for all }x\in\Omega, t\in(0,T).
\end{equation}
\section{Proof of Theorem \ref{Main}}\label{section_proof}
\begin{lemma}\label{lem_hoelder} 
Let $(u,v)$ be a solution of \eqref{P} in $\Omega \times (0,\tm)$. 
If $\tm<\infty$, then there exists some $\alpha\in (0,1)$ such that 
$$v \in C^{\alpha, \frac{\alpha}{2}} ({\overline{\Omega}\times [\tfrac{1}{2}\tm,\tm]}).$$
\end{lemma}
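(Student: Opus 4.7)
The strategy is to first translate the a priori $L^\infty$-bound on $v$ provided by \eqref{v_bound} into improved integrability of $u$, and then feed this into the linear heat equation satisfied by $v$ to apply classical parabolic regularity. Since $\tm<\infty$, the estimate \eqref{v_bound} with $T=\tm$ gives a finite $v^\star \in L^\infty(\Omega)$ with $0\leq v\leq v^\star$ on $\overline{\Omega}\times[0,\tm)$. By \eqref{gamma1}, the motility $\gamma(v)$ is then bounded above by $\gamma(0)$ and below by $\gamma(\sup v^\star)>0$, so that $u_t=\Delta(\gamma(v)u)$ becomes a uniformly parabolic equation in divergence form.

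The plan is next to upgrade the mass bound $u\in L^\infty(0,\tm;L^1(\Omega))$ to an $L^p$-bound with $p>(n+2)/2$. Testing the $u$-equation against $u^{p-1}$ and using $\gamma'\leq 0$ together with Young's inequality leads to a differential inequality roughly of the form
\begin{equation*}
\frac{d}{dt}\int_\Omega u^p + c\int_\Omega \gamma(v)u^{p-2}|\nabla u|^2 \leq C\int_\Omega u^p|\nabla v|^2.
\end{equation*}
The critical input to dominate the right-hand side is the $L^\infty$-bound on $w=(I-\Delta)^{-1}u$ from Lemma \ref{keylem1}: because the Green kernel of $I-\Delta$ has the $|x-y|^{2-n}$ singularity (for $n\geq 3$), boundedness of $w$ translates into a Morrey-type estimate $\int_{B_r(x)\cap\Omega}u(\cdot,t)\,dx \leq Cr^{n-2}$, uniform in $x\in\overline{\Omega}$ and $t\in[0,\tm)$. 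This Morrey information, propagated through the elliptic part of the $v$-equation into improved integrability of $|\nabla v|$ and combined with a Gagliardo--Nirenberg interpolation, should close a Moser-type iteration yielding $u\in L^p(\Omega\times(\tfrac{1}{4}\tm,\tm))$ for some supercritical $p$.

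With such $u\in L^p$ at our disposal, the classical parabolic $L^p$-theory applied to the linear equation $v_t-\Delta v+v=u$ with Neumann data and $W^{1,p_0}$ initial values places $v$ in $W^{2,1}_p(\Omega\times(\tfrac{1}{3}\tm,\tm))$, and the parabolic Sobolev embedding $W^{2,1}_p \hookrightarrow C^{\alpha,\alpha/2}$ (which requires exactly $p>(n+2)/2$) completes the proof on $\overline{\Omega}\times[\tfrac{1}{2}\tm,\tm]$. The principal difficulty lies in the middle step: closing the passage from the $L^1$-mass of $u$ and the $L^\infty$-bound on $w$ to an $L^p$-bound on $u$ with $p$ supercritical. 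The sign condition $\gamma'\leq 0$ and the Morrey structure hidden in Lemma \ref{keylem1} are both indispensable here; mass conservation alone is insufficient.
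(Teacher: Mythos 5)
Your proposal diverges from the paper at the crucial middle step, and that step contains a genuine gap. The paper never attempts to bound $u$ in any $L^p$ with $p>1$ at this stage. Instead it exploits the identity of Lemma \ref{keylem1}: the function $w=(I-\Delta)^{-1}u$ solves $w_t-\gamma(v)(\Delta w-w)=(I-\Delta)^{-1}[\gamma(v)u]$, which --- thanks to \eqref{v_bound} --- is a single uniformly parabolic equation with merely bounded measurable coefficient $\gamma(v)$, bounded right-hand side, and bounded solution $w$. De Giorgi--Nash--Moser theory (Lieberman, Corollary 7.51) then gives $w\in C^{\alpha,\alpha/2}$ with no integrability information on $u$ whatsoever. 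Hölder regularity is transferred to $v$ by a second trick: $z:=(I-\Delta)^{-1}v$ solves $z_t=\Delta z-z+w$, so parabolic Schauder theory gives $z\in C^{2+\alpha,1+\alpha/2}$ and hence $v=z-\Delta z\in C^{\alpha,\alpha/2}$.

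The gap in your route is precisely where you acknowledge ``the principal difficulty lies.'' The Morrey bound $\int_{B_r(x)\cap\Omega}u(\cdot,t)\,dx\leq Cr^{n-2}$ that follows from $w\in L^\infty$ is \emph{scaling-critical} for this problem (it is the Morrey analogue of $u\in L^{n/2}$, the critical space in dimension $n$), and critical information does not self-improve to supercritical $L^p$ bounds without a smallness condition. Your differential inequality $\frac{d}{dt}\int_\Omega u^p+c\int_\Omega\gamma(v)u^{p-2}|\nabla u|^2\leq C\int_\Omega u^p|\nabla v|^2$ cannot be closed by interpolation from critical data alone: controlling $\int u^p|\nabla v|^2$ requires integrability of $u$ beyond what mass conservation plus the Morrey bound provide, which is circular. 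Indeed, if this step worked by a routine Moser iteration, the main theorem of the paper would follow directly and the authors would not need the abstract evolution-equation machinery with time-dependent operators (Tanabe's fundamental solution estimates), which itself requires the Hölder continuity of $\gamma(v)$ established by this very lemma as input. Your first step (uniform parabolicity from \eqref{v_bound}) and your last step (parabolic $L^p$ theory and the embedding $W^{2,1}_p\hookrightarrow C^{\alpha,\alpha/2}$ for $p>(n+2)/2$) are fine, but the bridge between them is missing, and the paper's detour through $w$ and $z$ is exactly the device that makes the bridge unnecessary.
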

\begin{proof}
As noted in Lemma \ref{keylem1}, $(u,v,w) \in (C^{2,1}(\overline{\Omega}\times(0,T_{\mathrm{max}})))^3$ satisfies
\begin{equation}\label{key_eq1}
w_t- \gamma(v)(\Delta w -w)=(I-\Delta)^{-1}[\gamma(v)u]
\end{equation}
classically on $(0,\tm)$. 
Here functions $\gamma (v)$ and $(I-\Delta)^{-1}[\gamma(v)u]$ are bounded on $[\frac{1}{2}\tm,\tm]$.  
Indeed, it follows by \eqref{v_bound} that for all $t \in [\frac{1}{2}\tm,\tm)$,
$$\gamma (v^\ast (\tm)) \leq \gamma(v(t)) \leq \gamma (0).$$
By the maximum principle we have
\begin{eqnarray*}
0\leq  (I-\Delta)^{-1}[\gamma(v)u] 
 &\leq& (I-\Delta)^{-1}[\gamma(0)u]\\ 
&=& \gamma(0) (I-\Delta)^{-1}[u]\\
&=& \gamma(0) w,
\end{eqnarray*}
and then by Lemma \ref{keylem1} there exists some $C(\tm)>0$ such that for all $t \in (0,\tm)$,
\begin{equation}\label{inftyest_z}
0\leq  (I-\Delta)^{-1}[\gamma(v)u] \leq C(\tm).
\end{equation}
Since $w$ is a bounded weak solution of \eqref{key_eq1} on $[\frac{1}{2}\tm,\tm]$ with $w(0)= w(\frac{1}{2}\tm)$,  
the H\"older regularity estimate (\cite[Corollary 7.51]{Lieberman}) guarantees some $\alpha\in (0,1)$ such that 
$$w \in C^{\alpha, \frac{\alpha}{2}} ({\overline{\Omega}\times [\tfrac{1}{2}\tm,\tm]}).$$

Let $z(x,t):= (1-\Delta)^{-1}v(x,t). $
By the definition of $w$ and $z$, it follows
\begin{equation}\label{key_eq2}
z_t = \Delta z -z +w,
\end{equation}
where $z$ is bounded and $w \in C^{\alpha, \frac{\alpha}{2}} ({\overline{\Omega}\times [\tfrac{1}{2}\tm,\tm]})$ as proved above. 
By applying the parabolic regularity estimate (\cite[Chapter IV, Theorem 5.3]{LSU})  to \eqref{key_eq2}, we have 
$$z \in C^{2+\alpha, 1+\frac{\alpha}{2}} ({\overline{\Omega}\times [\tfrac{1}{2}\tm,\tm]}),$$
where we used the fact $z(\tfrac{1}{2}\tm)$ satisfies the compatibility condition. 
Thus by the elliptic regularity estimate it follows
$$v \in C^{\alpha, \frac{\alpha}{2}} ({\overline{\Omega}\times [\tfrac{1}{2}\tm,\tm]}).$$
We conclude the proof.
\end{proof}

\begin{proof}[Proof of Theorem \ref{Main}]
Assume to the contrary that $\tm<\infty$.
Define the operator
$$
A(t) \varphi := -\gamma (v(t+\tfrac{1}{2}\tm)) (\Delta \varphi -\varphi)
$$
for $\varphi \in \{ \psi \in  W^{2, p}(\Omega) \,|\, \partial_\nu \psi = 0\ \mbox{on }\partial \Omega\}$ with $p\in (1,\infty)$. 
Since $\gamma (v) $ is H\"older continuous (Lemma \ref{lem_hoelder}), 
the operator $-A(t)$ generates analytic semigroup in $L^p(\Omega)$ for any $p\in (1,\infty)$. 
Denoting
\begin{eqnarray*}
W(s) &:=& w(s+\tfrac{1}{2}\tm),\\
F(s) &:=& 
(1-\Delta)^{-1}(\gamma(v(s+\tfrac{1}{2}\tm))u(s+\tfrac{1}{2}\tm))
\end{eqnarray*}
for $s \in [0,\tfrac{1}{2}\tm)$, 
we can check that $W \in C([0,\tfrac{1}{2}\tm); L^p(\Omega)) \cap C^1((0,\tfrac{1}{2}\tm); L^p(\Omega))$ is a solution of the evolution equation 
$$
\begin{cases}
\frac{d}{dt} W + A(t)W = F \qquad t \in [0,\tfrac{1}{2}\tm),\\
W(0)=w(\tfrac{1}{2}\tm).
\end{cases}
$$
Since the elliptic regularity theorem implies $F \in C([0,\tfrac{1}{2}\tm); L^p(\Omega))$ for any $p \in (1,\infty)$, 
in view of the abstract theory (\cite[Theorem 5.2.2]{Tanabe}), 
the solution $W$ can be represented by the integral equation: 
$$
W(t) = U(t,0) w(\tfrac{1}{2}\tm) + \int_0^t U(t,s)F(s)\,ds
\qquad t \in [0,\tfrac{1}{2}\tm),
$$
where $U(t,s)$ is the fundamental solution. 
Due to the fact that $w(\tfrac{1}{2}\tm)$ and $F(s)$ satisfy the Neumann boundary condition, we can apply the estimate of fundamental solutions (\cite[Theorem 5.2.1]{Tanabe}) to have 
\begin{eqnarray*}
&&\| A(t)W(t)\|_{L^p(\Omega)} \\
&\leq & \| A(t) U(t,0) (A(0))^{-1}A(0)w(\tfrac{1}{2}\tm) \|_{L^p(\Omega)}
 + \int_0^t \| A(t) U(t,s)(A(s))^{-1}A(s)F(s)\|_{L^p(\Omega)} \,ds\\
 &\leq & C(p) \| A(0)w(\tfrac{1}{2}\tm) \|_{L^p(\Omega)}
 + C(p) \int_0^t  \| A(s)F(s)\|_{L^p(\Omega)} \,ds
\end{eqnarray*}
with some $C(p)>0$. 
By the definition of $W$, it follows
\begin{eqnarray*}
\| A(t)W(t)\|_{L^p(\Omega)} 
&= & \| \gamma (v(t+\tfrac{1}{2}\tm)) (\Delta w(t+\tfrac{1}{2}\tm) -w(t+\tfrac{1}{2}\tm))\|_{L^p(\Omega)} \\
&=&
\| \gamma (v(t+\tfrac{1}{2}\tm))u(t+\tfrac{1}{2}\tm)\|_{L^p(\Omega)}.
\end{eqnarray*}
We also obtain for $s \in [0,\tfrac{1}{2}\tm)$,
\begin{eqnarray*}
A(s)F(s) &=& 
-\gamma (v(s+\tfrac{1}{2}\tm)) (\Delta - 1)
\left[(1-\Delta)^{-1}(\gamma(v(s+\tfrac{1}{2}\tm))u(s+\tfrac{1}{2}\tm))\right]\\ 
&=&  (\gamma (v(s+\tfrac{1}{2}\tm)))^2 u(s+\tfrac{1}{2}\tm).
\end{eqnarray*}
Since $\gamma (v)$ is bounded, for any $p\in (1,\infty)$ it follows for $t\in [0,\tfrac{1}{2}\tm)$
$$
 \| u(t+\tfrac{1}{2}\tm)) \|_{L^p(\Omega)}  
 \leq 
 C^\prime(p) \|(\Delta-1) w(\tfrac{1}{2}\tm)\|_{L^p(\Omega)}
 + C^\prime(p) \int_0^t  \| u(s+\tfrac{1}{2}\tm) \|_{L^p(\Omega)} \,ds
$$
with some $C^\prime(p)>0$.
By the Gronwall inequality, for any $p\in (1,\infty)$ 
there exists some $ C^{\prime\prime}(p)>0$ such that
$$
\limsup_{t \to \tm } \|  u(t)\|_{L^p(\Omega)}
 \leq  C^{\prime\prime}(p).
$$
We can pick up some $p>\frac{n}{2}$ in the above and by Moser's iteration argument (see \cite{Anh19}) it follows
$$
\limsup_{t \to \tm } \| u(t)\|_{L^\infty(\Omega)} < \infty,
$$
which contradicts the assumption $\tm<\infty$.  
\end{proof}  
As mentioned in Introduction, the Lyapunov functional and the stationary problem of \eqref{P} with the case $\gamma (v) = e^{-v}$ are same as one of the Keller--Segel system. 
We can directly employ calculations in \cite[Section 3]{Win2010}.
\begin{proof}[Proof of Theorem \ref{Main2}]
On the contrary, the solution $(u,v)$ is uniformly bounded in time. 
By the parabolic Schauder theory and the compactness argument, there exist a sequence of time $t_k \to \infty$ and a stationary solution $(u_\infty, v_\infty)$ such that the solution $(u(\cdot,t_k),v(\cdot,t_k))$ converges to $(u_\infty, v_\infty)$ in $C^2(\overline{\Omega})$ as $t_k \to \infty$, where the following inequality holds
\begin{equation}\label{contra}
\mathcal{F} (u_\infty, v_\infty) \leq \mathcal{F}(u_0,v_0)
\end{equation}
(see \cite[Lemma 3.1]{Win2010}).
Through all radially symmetric stationary solutions $(u_\infty, v_\infty)$, the infimum of the functional $\mathcal{F}(u_\infty, v_\infty)$ is bounded from below (\cite[Lemma 3.4]{Win2010}).  
On the other hand, for any $m>0$ we could construct an initial datum $(u_0,v_0)$ with$\io u_0 =m$ having sufficiently large negative energy 
$\mathcal{F}(u_0,v_0)$ (\cite[Lemma 3.2]{Win2010}), 
which contradicts \eqref{contra}. 
Therefore the corresponding global solution $(u,v)$ must be unbounded.
\end{proof}
\bigskip
\noindent\textbf{Acknowledgments} \\
K. Fujie is supported by Japan Society for the Promotion of Science (Grant-in-Aid for Early-Career Scientists; No.\ 19K14576). 
T. Senba is supported by Japan Society for the Promotion of Science (Grant-in-Aid for Scientific Research(C); No.\ 18K03386)

\end{document}